\newcommand{\sslash}{\mathbin{/\mkern-6mu/}}
\newcommand*{\bfrac}[2]{\genfrac{}{}{0pt}{}{#1}{#2}}
\newcommand{\Addresses}{{
  \bigskip
  \footnotesize

  Ruoxi Li, \textsc{Department of Mathematics, University of Pittsburgh, Pittsburgh, PA, USA}\par\nopagebreak
  \textit{E-mail address}: \texttt{rul44@pitt.edu}

  \medskip

  Rahul Singh, \textsc{Department of Mathematics, Louisiana State University, Baton Rouge, LA, USA}\par\nopagebreak
  \textit{E-mail address}: \texttt{rahulsingh@lsu.edu}
}}
\theoremstyle{plain}
\newtheorem{theorem}{Theorem}[section]
\newtheorem{proposition}[theorem]{Proposition}
\newtheorem{corollary}[theorem]{Corollary}
\newtheorem{lemma}[theorem]{Lemma}
\newtheorem{fact}[theorem]{Fact}
\newtheorem{properties}[theorem]{Properties}
\theoremstyle{definition}
\newtheorem{definition}[theorem]{Definition}
\theoremstyle{remark}
\newtheorem{remark}[theorem]{Remark}
\newtheorem{example}[theorem]{Example}
\title[Explicit formulas for mixed Hodge polynomials]{Explicit formulas for mixed Hodge polynomials of character varieties of nilpotent groups}
\author{Ruoxi Li, Rahul Singh}
\date{}
\DeclareMathOperator{\Hom}{Hom}
\DeclareMathOperator{\Spec}{Spec}
\DeclareMathOperator{\diag}{diag}
\def\subsubsection{\@startsection{subsubsection}{3}%
\z@{.5\linespacing\@plus.2\linespacing}{-.5em}%
{\smallfont\bfseries}}
\begin{document}
\renewcommand\abstractname{\textbf{Abstract}}
\begin{abstract}
Let $\Hom^0(\Gamma,G)$ be the path-connected component of the identity representation of the variety of representations of a finitely generated nilpotent group $\Gamma$ into a connected reductive complex affine algebraic group $G$. With the formulas given by Florentino, Lawton and Silva, we provide explicit partition type formulas for the mixed Hodge polynomials of character varieties $\Hom^0(\Gamma,G)\sslash G$ when $G=Sp_{2n}$ and $G=SO_{n}$.
\end{abstract}
\maketitle
\tableofcontents
\section{Introduction}
Given a complex reductive group $G$, and a finitely presented group $\Gamma$, the $G$-character variety of $\Gamma$ is the (affine) geometric invariant theory (GIT) quotient
\[\mathcal{M}_\Gamma G=\Hom(\Gamma,G)\sslash G.\]
When the group $\Gamma$ is the fundamental group of a Riemann surface, the non-abelian Hodge correspondence (see, for example \cite{Sim}) shows that (components of) $\mathcal{M}_\Gamma G$ are homeomorphic to certain moduli spaces of $G$-Higgs bundles. They appear in connection to important problems in Mathematical Physics: for example, these spaces play an important role in the quantum field theory interpretation of the geometric Langlands correspondence, in the context of mirror symmetry (\cite{KW}). It is thus an important question to understand the geometry and topology of character varieties. In this article we look at the case of finitely generated nilpotent groups.


Let $\Gamma$ be a finitely generated nilpotent group of abelian rank $\geq 1$, that is, the free part of $\Gamma_{Ab}:=\Gamma/[\Gamma,\Gamma]$ is isomorphic to $\mathbb{Z}^r$ for some natural number $r\geq 1$. The character varieties of such nilpotent groups have been studied by Florentino-Lawton, Florentino-Lawton-Silva, Sikora, and Ramras-Stafa, among others (see, eg, \cite{FL,FLS,Sik,RS,Sta}). Let $\mathcal{M}_{\Gamma}^{0} G$ be the path-connected component of the identity representation in $\mathcal{M}_\Gamma G$. The explicit formulas for the mixed Hodge polynomials of $\mathcal{M}_{\Gamma}^{0} G$ is given by Florentino-Silva (\cite{FS}) and Florentino-Lawton-Silva (\cite{FLS}) in terms of determinant of the action of Weyl group of $G$ on the cohomolgy of maximal torus. \cite{FS} obtained even more explicit formulas in terms of partitions when $G=GL_{n}$ and $SL_{n}$. In this paper, we give similar partition type formulas when $G=Sp_{2n}$ and $SO_{n}$.

The outline of the article is as follows. Section 2 covers notations and preliminaries on character varieties and mixed
Hodge polynomials. In section 3 and 4, we explain
how to use the conjugacy classes of the wreath products to give the explicit formulas for the mixed Hodge polynomials of $\mathcal{M}_{\Gamma}^{0}G$ when $G=Sp_{2n},SO_{2n}$ and $SO_{2n+1}$. 
\subsection{Acknowledgements} We are thankful to Sean Lawton for bringing our attention to one of his recent paper with Carlos Florentino and Jaime Silva on character varieties of nilpotent groups that has corrected a mistake in the previous version of this article and helped us generalize to nilpotent groups. We would also like to thank Roman Fedorov for several comments and raising interesting questions.
\section{Preliminaries}
\subsection{Character varieties}
Given a finitely generated group $\Gamma$ and a complex reductive group $G$, the $G$-character variety of $\Gamma$ is defined to be the (affine) Geometric Invariant Theory (GIT) quotient (see \cite{MFK, Muk}; \cite{Sch} for topological aspects):
\[\mathcal{M}_\Gamma G=\Hom(\Gamma,G)\sslash G.\]
Note that $\Hom(\Gamma,G)$, the space of homomorphisms $\rho:\Gamma\to G$, is an affine variety,  as $\Gamma$ is defined by algebraic relations, and it is also a $G$-variety when considering the action of $G$ by conjugation on $\Hom(\Gamma,G)$. For example, when $G=GL_{n}$ and $\Gamma=\mathbb{Z}^r$, $\Hom(\Gamma,G)$ is isomorphic to the variety of $r$-tuples of mutually commuting invertible $n\times n$ matrices and the group $GL_{n}$ acts diagonally by conjugation on each matrix.

The GIT quotient above is the spectrum of the ring $\mathbb{C}[\Hom(\Gamma,G)]^G$ of $G$-invariant regular functions on $\Hom(\Gamma,G)$:
\[\Hom(\Gamma,G)\sslash G:=\Spec\left(\mathbb{C}\left[\Hom(\Gamma,G)\right]^G\right).\]
The GIT quotient does not parametrize all orbits, since some of them may not be distinguishable by invariant functions, see, for example \cite{Muk}. For detailed definitions and properties of general character varieties, we refer to \cite{FL,Sik}.

In general, $\mathcal{M}_\Gamma G$ is not path-connected in the analytic topology (hence not irreducible in the Zariski topology). Let us denote by
\[\mathcal{M}^0_\Gamma G:=\Hom^0(\Gamma,G)\sslash G\]
the path-connected component of the identity representation in $\mathcal{M}_\Gamma G$.

We have another interesting component when $\Gamma$ is a finitely presentable group whose abelianization is free, that is
\[\Gamma_{Ab}=\Gamma/[\Gamma,\Gamma]\cong\mathbb{Z}^r\]
for some $r\in\mathbb{N}$. We consider the following sequence with $T\subset G$ a fixed maximal torus
\[T^r\cong\Hom(\mathbb{Z}^r,T)\hookrightarrow\Hom(\Gamma_{Ab},G)\hookrightarrow\Hom(\Gamma,G)\twoheadrightarrow\mathcal{M}_\Gamma G.\]
Let us denote by $\mathcal{M}^T_\Gamma G\subset\mathcal{M}_\Gamma G$ the image of the composition above and call it the torus component. It follows that $\mathcal{M}^T_\Gamma G$ is an irreducible subvariety of $\mathcal{M}_\Gamma G$, being the image of the irreducible variety $T^r$ under a morphism.

Obviously, the identity representation ($\rho(\gamma)=e$ for all $\gamma\in\Gamma$, $e\in G$ being the identity) belongs to $\mathcal{M}^T_\Gamma G$. Since $\mathcal{M}^T_\Gamma G$ is path-connected (being irreducible over $\mathbb{C}$), we conclude that $\mathcal{M}^T_\Gamma G\subset\mathcal{M}^0_\Gamma G$ .

In this article, we will focus on the case when $\Gamma$ is a finitely generated nilpotent group of abelian rank $\geq 1$, that is, the free part of $\Gamma/[\Gamma,\Gamma]$ is isomorphic to $\mathbb{Z}^r$ for some natural number $r\geq 1$, the abelian rank of $\Gamma$. The topology and geometry of the corresponding $G$-character varieties
\[\mathcal{M}_{\Gamma}G=\Hom(\Gamma,G)\sslash G,\]
was studied in \cite{FL,FLS,Sik}, among others. 
\begin{remark}\label{remark}
    In the case when $\Gamma$ is a finitely generated nilpotent group of abelian rank $0$ (which is the case if and only if it is a finite group), one has $\mathcal{M}_\Gamma G\cong\mathcal{M}_\Gamma K$, where $K$ is a maximal compact subgroup of $G$. From \cite[Corollary 3.2]{BS}, we have that $\mathcal{M}_\Gamma G$ is a finite set and thus $\mathcal{M}_{\Gamma}^{0} G$ is a single point.
\end{remark}

In the case when $\Gamma=\mathbb{Z}^r$, $r\geq 1$, $\mathcal{M}^T_\Gamma G$ is an irreducible component of $\mathcal{M}_\Gamma G$ by \cite[Theorem 2.1]{Sik}. The following theorem in \cite[Theorem 3.4]{FLS} generalizes \cite[Remark 2.4]{Sik}, and completely answers a question raised in \cite[Problem 5.7]{FL}.

\begin{fact}[Sikora, Florentino-Lawton-Silva]
For every $r\in\mathbb{N}$ and a complex reductive group $G$
\[
\mathcal{M}^T_{\mathbb{Z}^r} G=\mathcal{M}^0_{\mathbb{Z}^r} G.
\]
\end{fact}
The following theorem (\cite[Theorem 1.2]{FL}) is important for us. 
\begin{fact}[Florentino-Lawton]\label{quotientoftorus}

If the semisimple part of $G$ is a product of $SL_n'$s and $Sp_n'$s, then $\mathcal{M}_{\mathbb{Z}^r}G$ is irreducible, so that $\mathcal{M}_{\mathbb{Z}^r}G=\mathcal{M}^0_{\mathbb{Z}^r}G=\mathcal{M}^T_{\mathbb{Z}^r} G.$
\end{fact}
\subsection{Mixed Hodge polynomials}
Mixed Hodge structures are generalizations to quasi-projective varieties of Hodge structures, which are used to study smooth projective varieties. Mixed Hodge polynomials encode numerical invariants coming from mixed Hodge structures in the form of polynomials, which can be thought of as deformations of the Poincar\'e polynomials and the so-called $E$-polynomials. Let us give more details.

For a smooth projective variety $X$, the cohomology of the underlying complex manifold satisfies the Hodge decomposition, that is,
\[
H^{k}(X,\mathbb{C})\cong\oplus_{p+q=k}H^{p,q}(X), \text{ and }\overline{H^{p,q}(X)}\cong H^{q,p}(X).
\]
Such a structure is called a pure Hodge structure of weight $k$ and can be reformulated in terms of filtrations.
Deligne (\cite{Del1,Del2}) discovered the existence of such structures for any complex quasi-projective variety $X$. More precisely, $H^{k}(X,\mathbb{C})$ has a natural decreasing filtration (the Hodge filtration) $F_{\bullet}$:
\[
H^{k}(X,\mathbb{C})=F_{0}\supset F_{1}\supset\ldots F_{k+1}=\{0\}
\]
and a natural increasing filtration (the weight filtration) $W^{\bullet}$:
\[
\{0\}=W^{-1}\subset W^{0}\subset \ldots W^{2k}=H^{k}(X,\mathbb{Q}),
\]
such that the induced filtration by $F_{\bullet}$ on the graded pieces $Gr^{W^{\bullet}_{\mathbb{C}}}=W_{\mathbb{C}}^{p+q}H^{k}(X,\mathbb{C})/W_{\mathbb{C}}^{p+q-1}H^{k}(X,\mathbb{C})$ is a pure Hodge structure of weight $p+q$ (here $W^{\bullet}_{\mathbb{C}}$ denotes the complexified weight filtration). Define 
\[
H^{k,p,q}(X):=Gr^{F_{\bullet}}_{p}Gr^{W_{\mathbb{C}}^{\bullet}}_{p+q}H^{k}(X,\mathbb{C}).
\]
Define the mixed Hodge numbers of $X$ as:
\[
h^{k,p,q}(X):=\dim_{\mathbb{C}}(H^{k,p,q}(X,\mathbb{C})).
\]
\begin{definition}
    Let $X$ be a quasi-projective variety over $\mathbb{C}$ of dimension $d$. The mixed Hodge polynomial of $X$ is defined to be
    \[
    \mu_{X}(t,u,v)=\sum_{k,p,q}h^{k,p,q}(X)t^{k}u^{p}v^{q}
    \]
\end{definition}
\begin{remark}
    \begin{enumerate}[(i)]
        \item Note that the total degree of $\mu_{X}$ is at most $2d$.
        \item If we specialize $u=v=1$, then we get the Poincar\'e polynomial and the specialization $t=-1$ gives us the $E$-polynomial of $X$. 
        \item In \cite[Theorem 5.2]{FS}, it is shown that the mixed Hodge structure on $\mathcal{M}_{\mathbb{Z}^r}G$ is round, that is, the only non-zero mixed Hodge numbers can be of the form $h^{k,k,k}(\mathcal{M}_{\mathbb{Z}^r}G)$, $k=0,\ldots,2\dim \mathcal{M}_{\mathbb{Z}^r}G$.
    \end{enumerate}
\end{remark}
Here are some common properties of mixed Hodge numbers:
\begin{properties}
    Let $X$ be a quasi-projective variety over $\mathbb{C}$. Then
    \begin{enumerate}[(i)]
        \item The mixed Hodge numbers are symmetric, that is, $h^{k,p,q}(X)=h^{k,q,p}(X)$. In particular, $\mu_{X}(t,u,v)=\mu_{X}(t,v,u)$.
        \item If $h^{k,q,p}(X)\neq 0$, then $p,q\leq k$. If $X$ is smooth, then $p+q\geq k$ and if $X$ is projective, then $p+q\leq k$.
    \end{enumerate}
\end{properties}
\begin{example}
    \begin{enumerate}[(i)]
        \item Let $X=\mathbb{C}^n$. We have $h^{0,0,0}=1$ and all other mixed Hodge numbers of $X$ are zero, therefore
        \[
        \mu_{\mathbb{C}^n}(t,u,v)=1.
        \]
        \item Let $X=\mathbb{C}\setminus
        \{0\}$. Since $\dim H^{1}(X,\mathbb{C})=1$, $X$ cannot have a pure Hodge structure since $h^{k,p,q}=h^{k,q,p}$. We have $h^{0,0,0}=h^{1,1,1}=1$ and all other mixed Hodge numbers of $X$ are zero. Therefore
        \[
        \mu_{\mathbb{C}\setminus
        \{0\}}(t,u,v)=1+tuv.\]
    \end{enumerate}
\end{example}
We have the following result about the mixed Hodge polynomial of $\mathcal{M}^{0}_{\Gamma}G$ (see \cite[Theorem 5.2]{FS}, \cite[Corollary 5.10]{FLS}):
\begin{fact}[Florentino-Lawton-Silva, Florentino-Silva]\label{generalform}
    Let $G$ be a complex reductive group and let $\Gamma$ be a finitely generated nilpotent group of abelian rank $r\geq 1$. Then $\mathcal{M}^{0}_{\Gamma}G$ is round and 
    \[
    \mu_{\mathcal{M}^{0}_{\Gamma}G}(t,u,v)
    =
    \mu_{\mathcal{M}^{0}_{\mathbb{Z}^r}G}(t,u,v)
    =
    \frac{1}{|W_{G}|}\sum_{w\in W_{G}}\big(\det(I+tuvA_{w})\big)^{r},
    \]
    where $A_{w}$ is the automorphism of $H^{1}(T,\mathbb{C})$ induced by the action of $w$ on $T$.
\end{fact}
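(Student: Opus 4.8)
The plan is to reduce the computation to the rational cohomology of a finite quotient of a product of tori, and then to evaluate the resulting graded $W_G$-character by the exterior-algebra trace identity. First I would make the geometry explicit: the natural map $(T_G)^r/W_G \to \mathcal{M}^0_{\mathbb{Z}^r}G$ (induced by sending an $r$-tuple of commuting elements of $T_G$ to its conjugacy data) is finite and birational, since a generic commuting tuple of semisimple elements of $G$ lies in a common maximal torus, unique up to $W_G$. As $(T_G)^r/W_G$ is normal (a finite quotient of a smooth affine variety), this map is the normalization, so $\mathcal{M}^*_{\mathbb{Z}^r}G \cong (T_G)^r/W_G$ for arbitrary reductive $G$ — here the classical-type hypothesis of Theorem \ref{quotientoftorus}(ii) is only needed to identify the component itself, not its normalization. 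It therefore suffices to compute $\mu_{(T_G)^r/W_G}(t,u,v)$.

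Next, since $W_G$ is finite and acts by algebraic automorphisms, I would invoke the transfer isomorphism in rational cohomology,
\[
H^*\big((T_G)^r/W_G,\mathbb{Q}\big)\cong H^*\big((T_G)^r,\mathbb{Q}\big)^{W_G},
\]
which is an isomorphism of mixed Hodge structures because the $W_G$-action is by morphisms of MHS and taking invariants is exact. Using Künneth together with the exterior-algebra description $H^*(T_G,\mathbb{C})\cong \Lambda^\bullet H^1(T_G,\mathbb{C})$, this gives
\[
H^*\big((T_G)^r,\mathbb{C}\big)\cong \Lambda^\bullet\big(H^1(T_G,\mathbb{C})^{\oplus r}\big).
\]
Because $T_G\cong(\mathbb{C}^\times)^{\dim T_G}$, its $H^1$ is pure of type $(1,1)$, so $H^k$ is of type $(k,k)$; the whole space, and hence the $W_G$-invariant subspace, is round. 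This simultaneously establishes the roundness claim and collapses the mixed Hodge polynomial to a single Poincaré-type series in the variable $s:=tuv$.

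Finally, I would compute the graded dimension of the invariants degree-by-degree via the averaging formula $\dim V^{W_G}=\tfrac{1}{|W_G|}\sum_{w\in W_G}\operatorname{tr}(w\mid V)$, and evaluate the graded trace of each $w$ on the exterior algebra by the identity
\[
\sum_k \operatorname{tr}\big(w\mid \Lambda^k W\big)\,s^k=\det\big(I+s\,w|_W\big).
\]
Taking $W=H^1(T_G,\mathbb{C})^{\oplus r}$, on which $w$ acts as $A_w^{\oplus r}$, the right-hand side factors as $\big(\det(I+sA_w)\big)^r$; summing over $w$ and substituting $s=tuv$ yields the stated formula. The main obstacle I anticipate is bookkeeping at the level of mixed Hodge structures rather than the algebra: I must verify that the transfer isomorphism and the Künneth and exterior-algebra identifications are strictly compatible with the Hodge and weight filtrations, so that the cohomological grading matches the $(tuv)$-grading, and that the map $A_w$ entering the determinant is exactly the automorphism of $H^1(T_G,\mathbb{C})$ induced by $w$. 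Once this compatibility is secured, the remaining steps are routine.
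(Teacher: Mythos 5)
The paper does not actually prove this statement: it is imported verbatim as a Fact from \cite[Theorem 5.2]{FS}, so there is no internal proof to compare against. Your proposal is correct, and it is essentially a reconstruction of the argument given in the cited source: identify $\mathcal{M}^{*}_{\mathbb{Z}^r}G$ with $(T_G)^r/W_G$, pass to $W_G$-invariants via the transfer isomorphism $H^*\big((T_G)^r/W_G,\mathbb{Q}\big)\cong H^*\big((T_G)^r,\mathbb{Q}\big)^{W_G}$ (valid as mixed Hodge structures, since the $W_G$-action is by MHS morphisms and the averaging projector is an MHS morphism), note that $H^*\big((T_G)^r,\mathbb{C}\big)$ is the exterior algebra on a space of type $(1,1)$ classes so that degree-$k$ cohomology has type $(k,k)$ (this is exactly roundness), and conclude with the Molien-type identity $\sum_k \mathrm{tr}\big(w\mid \Lambda^k W\big)s^k=\det\big(I+s\,w|_W\big)$ applied to $W=H^1(T_G,\mathbb{C})^{\oplus r}$, on which $w$ acts as $A_w^{\oplus r}$. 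The one step you state more casually than it deserves is the first: the finiteness of $(T_G)^r/W_G\to\mathcal{M}^0_{\mathbb{Z}^r}G$ and its birationality are genuine theorems (due to Sikora and Florentino--Lawton; compare Theorem \ref{quotientoftorus}, which the present paper only quotes for classical semisimple part), not immediate observations; but your reduction --- generic commuting semisimple tuples lie in a maximal torus unique up to conjugacy, and a finite birational map from a normal variety is the normalization --- is precisely the right argument, so this is a matter of citing or fleshing out known results rather than a gap in the logic.
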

A partition of a positive integer $n$ is a sequence of positive integers $(\lambda_1\ge\lambda_2\ge\cdots\ge\lambda_l>0)$ such that $\sum_{i=1}^l \lambda_i = n$. The $\lambda_i$'s are called the parts of the partition. For $n\in\mathbb{N}$, let $\mathcal{P}_{n}$ be the set of partitions of $n$. We denote $\lambda\in\mathcal{P}_n$ as
\[
\lambda=[1^{a_{1}}2^{a_{2}}\ldots n^{a_{n}}]
\]
where $a_{i}\geq 0$ denotes the number of parts of $\lambda$ equal to $i$.

There is an even more explicit formula (see \cite[Theorem 5.2]{FS}) of Fact \ref{generalform} when $G=GL_n$:
\begin{fact}[Florentino-Lawton-Silva, Florentino-Silva]
    Let $\Gamma$ be a finitely generated nilpotent group of abelian rank $r\geq 1$. The mixed Hodge polynomial of $\mathcal{M}_{\Gamma}^{0} GL_{n}$ is given by
    \[
    \mu_{\mathcal{M}_{\Gamma}^{0}GL_{n}}(t,u,v)
    =
    \sum_{\lambda\in\mathcal{P}_{n}}\prod_{i=1}^{n}\frac{\big(1-(-tuv)^{i}\big)^{a_{i}r}}{a_{i}!i^{a_{i}}},
    \]
    where $a_{i}$ denotes the number of parts of $\lambda$ equal to $i$.
\end{fact}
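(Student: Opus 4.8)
The plan is to specialize Fact \ref{generalform} to $G=GL_n$ and carry out the resulting sum over the symmetric group explicitly. First I would observe that Fact \ref{generalform} applies to $\mathcal{M}_{\mathbb{Z}^r}GL_n$ without modification: by Theorem \ref{quotientoftorus}(iii) this variety is irreducible (so it coincides with its identity component $\mathcal{M}^0_{\mathbb{Z}^r}GL_n$), and by Remark \ref{normal}(i) it is normal (so it coincides with its normalization $\mathcal{M}^*_{\mathbb{Z}^r}GL_n$). Hence $\mu_{\mathcal{M}_{\mathbb{Z}^r}GL_n}(t,u,v)=\frac{1}{|W_G|}\sum_{w\in W_G}(\det(I+tuvA_w))^r$.

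Next I would make the combinatorial data concrete. For $GL_n$ the maximal torus is $T=(\mathbb{C}^*)^n$, so by the Künneth formula $H^1(T,\mathbb{C})\cong\mathbb{C}^n$, and the Weyl group is $W_G=S_n$ acting on $T$ by permuting the factors. Therefore the induced automorphism $A_w$ on $H^1(T,\mathbb{C})$ is simply the permutation matrix $P_w$ attached to $w\in S_n$, and in particular $\det(I+tuvA_w)$ depends only on the conjugacy class of $w$, that is, on its cycle type.

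The computational heart of the argument is the evaluation of $\det(I+x\,C_k)$ for a single $k$-cycle $C_k$, where I write $x=tuv$. Since the eigenvalues of the cyclic permutation matrix $C_k$ are exactly the $k$-th roots of unity $\zeta^0,\ldots,\zeta^{k-1}$, I would compute $\det(I+xC_k)=\prod_{j=0}^{k-1}(1+x\zeta^j)$ by relating it to the factorization $\prod_{j=0}^{k-1}(y-\zeta^j)=y^k-1$; a short manipulation yields the clean identity $\det(I+xC_k)=1-(-x)^k$. For a permutation $w$ of cycle type $\lambda=[1^{a_1}\cdots n^{a_n}]$, the matrix $P_w$ is block-diagonal in cyclic blocks, so $\det(I+tuvA_w)=\prod_{i=1}^n(1-(-tuv)^i)^{a_i}$, and raising to the $r$-th power gives $\prod_{i=1}^n(1-(-tuv)^i)^{a_ir}$.

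Finally I would reorganize the sum over $W_G=S_n$ by cycle type. Since the summand is constant on each conjugacy class, and the number of permutations in $S_n$ of cycle type $\lambda=[1^{a_1}\cdots n^{a_n}]$ is the standard value $n!/\prod_{i=1}^n(a_i!\,i^{a_i})$, the prefactor $1/n!$ cancels and the sum collapses to $\sum_{\lambda\in\mathcal{P}_n}\prod_{i=1}^n\frac{(1-(-tuv)^i)^{a_ir}}{a_i!\,i^{a_i}}$, which is precisely the claimed formula. I do not anticipate a serious obstacle: the only points requiring care are the root-of-unity evaluation giving $\det(I+xC_k)=1-(-x)^k$ and the bookkeeping of the conjugacy-class sizes, both of which are standard and mechanical.
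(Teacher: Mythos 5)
Your proof is correct, and it is essentially the paper's approach: the paper itself states this Fact without proof (quoting \cite{FS}), but your argument is exactly the template the paper uses to prove its own Theorem \ref{sp2n} for $Sp_{2n}$ --- specialize Fact \ref{generalform}, observe that $\det(I+tuvA_w)$ depends only on the conjugacy class of $w$, evaluate the determinant cycle-by-cycle (your root-of-unity identity $\det(I+xC_k)=1-(-x)^k$ is the same computation as the paper's companion-matrix Lemma \ref{determinant}), and divide by the class sizes, which for $S_n$ are $n!/\prod_i a_i!\,i^{a_i}$. The preliminary reductions you make, namely $\mathcal{M}_{\mathbb{Z}^r}GL_n=\mathcal{M}^0_{\mathbb{Z}^r}GL_n$ by Theorem \ref{quotientoftorus}(iii) and $\mathcal{M}^0_{\mathbb{Z}^r}GL_n=\mathcal{M}^*_{\mathbb{Z}^r}GL_n$ by Remark \ref{normal}(i), are also correct and are the same ones the paper invokes in its symplectic and orthogonal proofs.
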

In the next two sections, we present similar partition type formulas for $\mu_{\mathcal{M}_{\Gamma}^{0}Sp_{2n}}(t,u,v)$ and $\mu_{\mathcal{M}^{0}_{\Gamma}SO_{n}}(t,u,v)$.
\begin{remark}
In the case when $\Gamma=\mathbb{Z}^r$, using Fact \ref{quotientoftorus} we get partition type formulas for $\mu_{\mathcal{M}_{\mathbb{Z}^r}GL_{n}}(t,u,v)$.  
\end{remark}
\section{Symplectic and odd orthogonal groups}
\subsection{Symplectic groups}

Let $(V,\omega)$ be a symplectic vector space over $\mathbb{C}$ of dimension $2n$, that is, $\omega$ is a non-degenerate alternating bilinear form on $V$. The group $Sp_{2n}$ is defined to be the group of $A\in SL(V)$ such that $A$ preserves $\omega$. It is known that over $\mathbb{C}$ all the symplectic vector spaces are isomorphic and therefore we fix our $V$ to be $\mathbb{C}^{2n}$ with $\omega(e_{i},e_{i+n})=-\omega(e_{i+n},e_{i})=1$ if $1\leq i\leq n$ and $\omega(e_{i},e_{j})=0$ otherwise, and identify $Sp_{2n}$ with $A\in SL_{2n}(\mathbb{C})$ such that $A J A^t=J$, where
\[
J=\begin{bmatrix}
0
&
I_{n}
\\
-I_{n}
&
0
\end{bmatrix}.
\]
We will work with this matrix version of symplectic groups from now.

We choose our maximal torus $T_{Sp_{2n}}$ of $Sp_{2n}$ to be the standard one:
\[
T_{Sp_{2n}}=\{\diag(a_{1},\ldots,a_{n},a_{1}^{-1},\ldots,a_{n}^{-1}):a_{i}\in\mathbb{C}^{\times}, 1\leq i\leq n\}.
\]
The Weyl group $W_{Sp_{2n}}\cong N_{Sp_{2n}}(T_{Sp_{2n}})/T_{Sp_{2n}}$ acts on $T_{Sp_{2n}}$ by permuting $a_{1},\ldots,a_{n}$ (and $a_{1}^{-1},\ldots,a_{n}^{-1}$ accordingly, that is, $\sigma\cdot a_{i}^{-1}=a_{\sigma(i)}^{-1}$) and inverting some of the entries of $(a_{1},\ldots,a_{n})$ and the corresponding entries of $(a_{1}^{-1},\ldots,a_{n}^{-1})$. Thus, $W_{Sp_{2n}}$ can be identified with the wreath product $C_2\wr S_n$ of $C_2$ with $S_n$. Let us recall the definition of the wreath product $C_2\wr S_n$.

\begin{definition}
For a positive integer $n$, let $C_2^n$ denote the direct product of $n$-copies of cyclic groups of order $2$. Consider the following action of $S_n$ on $C_{2}^{n}$
\[
\sigma\cdot(a_1,\ldots,a_n)=(a_{\sigma^{-1}(1)},\ldots,a_{\sigma^{-1}(n)}), \quad 
(a_1,\ldots,a_n)\in C_{2}^{n}, \sigma\in S_n
.\]
Then the semidirect product of $C_2^n$ with $S_n$, with respect to the above action, is called the wreath product of $C_2$ and $S_n$.
\end{definition}
We write an element of $C_2\wr S_n$ as $[a_1,\ldots,a_n;\sigma]$, where $a_1,\ldots,a_n\in C_2$ and $\sigma\in S_n$. Here we take $C_2=\{-1,1\}$. 

Let $\mathbf{a}=[a_1,\ldots,a_n;\sigma],\mathbf{b}=[b_1,\ldots,b_n;\pi]\in C_2\wr S_n$, then $\mathbf{a}\mathbf{b}$ is the following element
\[[a_1b_{\sigma^{-1}(1)},\ldots,a_nb_{\sigma^{-1}(n)};\sigma\cdot\pi].\]
The inverse of $\mathbf{a}$ is $\mathbf{a}^{-1}=[a^{-1}_{\sigma(1)},\ldots,a^{-1}_{\sigma(n)};\sigma^{-1}]$.
\begin{definition}
    A signed partition $\lambda^\pm$ of a positive integer $n$ is a pair $(\lambda^{+},\lambda^{-})$, where $\lambda^{+}\in\mathcal{P}_{k}$, $\lambda^{-}\in\mathcal{P}_{l}$ for some $k,l\in\mathbb{N}_{\geq0}$ such that $k+l=n$. We will denote by $\mathcal{P}^{\pm}_n$ the set of signed partitions of $n$. For $\lambda^{\pm}=(\lambda^{+},\lambda^{-})\in\mathcal{P}^{\pm}_n$, we write
$$    \lambda^{\pm}=[1^{a_{1}}\overline{1}^{b_{1}}2^{a_{2}}\overline{2}^{b_{2}}\ldots m^{a_{m}}\overline{m}^{b_{m}}],
$$   
 where $a_{i}$ (resp. $b_{i}$) is the number of parts of $\lambda^+$ (resp. $\lambda^-$) equal to $i$. Of course, we must have $\sum_{i}i(a_{i}+b_{i})=n$.
The parts of $\lambda^+$ (resp. $\lambda^-$) are called parts with positive (resp. negative) sign.

 Sometimes we will also write $\lambda^{\pm}$ in the following form
 \[
 \lambda^{\pm}=[k_1^{s_1}\overline{k}_1^{t_1}\cdots k_u^{s_u}\overline{k}_u^{t_u}m_1^{x_1}\overline{m}_1^{y_1}\cdots m_v^{x_v}\overline{m}_v^{y_v}],
 \]
 where $k_1<\cdots<k_u$ are odd numbers, and $m_1<\cdots<m_v$ are even numbers with $s_{i}$ (resp. $t_{i}$) equal to the number of parts with positive sign (resp. negative sign) of $\lambda^{\pm}$ equal to $k_{i}$ ($x_{i}$'s and $y_{i}$'s are defined similarly).
\end{definition}
 \begin{example}
     There are five signed partitions of $2$ :
\[
[1^2],
[1\overline{1}],
[\overline{1}^2],
[2],
[\overline{2}].
\]
\end{example}
The conjugacy classes of $C_2\wr S_n$ have been determined by Carter in \cite{Car}. The conjugacy classes of $C_2\wr S_n$ are in one to one correspondence with the signed partitions of $n$. To state this correspondence, we recall the notion of cycle products.
\begin{definition}
Let $\lambda=(\lambda_1\geq\lambda_2\geq\ldots\geq\lambda_l>0)$ be a partition of $n$. Let $\sigma\in S_n$ be a
permutation of cycle type $\lambda$, say $\sigma= (i_1,\ldots,i_{\lambda_1})\cdots(k_1,\ldots,k_{\lambda_l})$, and let $(a_1,\ldots,a_n)\in C_{2}^{n}$. Consider $\mathbf{a}= [a_1,\ldots,a_n;\sigma]\in C_2\wr S_n$. The cycle products of $\mathbf{a}$ are defined to be the products $\prod_{j=1}^{\lambda_1}a_{i_j},\ldots,\prod_{j=1}^{\lambda_l}a_{k_j}$.   
\end{definition}
For any element $\mathbf{a}\in C_2\wr S_n$, the cycle products of $\mathbf{a}$ are either $-1$ or $1$. We have the following proposition in \cite[Section 2]{MS}.
\begin{fact}[Mishra-Srinivasan]\label{MS}
Let $\mathbf{a}=[a_1,\ldots,a_n;\sigma], \mathbf{b}=[b_1,\ldots,b_n;\pi]\in C_2\wr S_n$. Then $\mathbf{a}$ is conjugate to $\mathbf{b}$ in $C_2\wr S_n$ if and only if $\sigma$ is conjugate to $\pi$, and if $\tau\in S_n$ is such that $\tau\sigma\tau^{-1}=\pi$, then for each cycle $(i_1,\ldots,i_{\lambda_j})$ in $\sigma$, where $\lambda=(\lambda_1\geq\lambda_2\geq\ldots\geq\lambda_l>0$) is the cycle type of $\sigma$ (and hence also of $\pi$), the cycle product $a_{i_1}\cdots a_{i_{\lambda_j}}$ of $\mathbf{a}$ is equal to the cycle product $b_{\tau(i_1)}\cdots a_{\tau(i_{\lambda_j})}$ of $\mathbf{b}$, $1\leq j\leq l$.
\end{fact}
Let $\lambda=(\lambda_1\geq\lambda_2\geq\ldots\geq\lambda_l>0$) be a partition of $n$. Let $\mathbf{a}=[a_1,\ldots,a_n;\sigma]$ be an element of $C_2\wr S_n$ with $\sigma$ being of cycle type $\lambda$. For each $j=1,\ldots,l$, we leave $\lambda_j$ as it is, if the associated cycle product of $\mathbf{a}$ is $1$. We write $\overline{\lambda}_j$, if the associated cycle product of $\mathbf{a}$ is $-1$. This gives a way of associating a signed partition of $n$ with the elements of the group $C_2\wr S_n$. 

Now Fact \ref{MS} can be reformulated as:
\begin{proposition}\label{labellingconjugacyclasses}
Let $\mathbf{a}, \mathbf{b}\in C_2\wr S_n$. Then $\mathbf{a}$  and $\mathbf{b}$ are conjugate in $C_2\wr S_n$ if and only if the associated signed partitions are equal.
\end{proposition}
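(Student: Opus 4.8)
The plan is to show that Proposition \ref{labellingconjugacyclasses} is simply a restatement of Proposition \ref{MS} in the language of signed partitions, so the entire argument consists of translating the cycle-product data of Proposition \ref{MS} into the signed-partition labelling introduced immediately before. First I would observe that to each element $\mathbf{a}=[a_1,\ldots,a_n;\sigma]\in C_2\wr S_n$ we have already associated a signed partition $\lambda^{\pm}(\mathbf{a})$: the underlying partition is the cycle type $\lambda$ of $\sigma$, and each part $\lambda_j$ receives a negative sign precisely when the cycle product of $\mathbf{a}$ along the corresponding cycle of $\sigma$ equals $-1$, and a positive sign when it equals $1$. The key point is that this is well defined at the level of the \emph{multiset} of signed parts, even though there is no canonical ordering of the cycles of $\sigma$; what the signed partition records is, for each length $m$, how many cycles of $\sigma$ of length $m$ have cycle product $+1$ (giving $a_m$) and how many have cycle product $-1$ (giving $b_m$).

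The forward direction is to assume $\mathbf{a}$ and $\mathbf{b}$ are conjugate and deduce equality of the associated signed partitions. By Proposition \ref{MS}, conjugacy forces $\sigma$ and $\pi$ to have the same cycle type $\lambda$, so the underlying unsigned partitions agree. Moreover, fixing $\tau$ with $\tau\sigma\tau^{-1}=\pi$, Proposition \ref{MS} tells us that the cycle product of $\mathbf{a}$ on each cycle of $\sigma$ equals the cycle product of $\mathbf{b}$ on the image cycle of $\pi$ under $\tau$. Since $\tau$ carries cycles of $\sigma$ bijectively to cycles of $\pi$ of the same length, this gives a length-preserving bijection between cycles that matches the $\pm1$ cycle products; hence for every $m$ the number of length-$m$ cycles with product $+1$ (resp. $-1$) is the same for $\mathbf{a}$ and $\mathbf{b}$. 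This is exactly the statement that $\lambda^{\pm}(\mathbf{a})=\lambda^{\pm}(\mathbf{b})$.

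The reverse direction is to assume the signed partitions coincide and produce a conjugating element, which amounts to verifying that the combinatorial data singled out by Proposition \ref{MS} is also sufficient for conjugacy. Equality of signed partitions means $\sigma$ and $\pi$ share a cycle type (so they are conjugate in $S_n$) and, for each cycle length, the counts of $+1$-cycles and $-1$-cycles agree. I would then exhibit a bijection between the cycles of $\sigma$ and those of $\pi$ that preserves both length and cycle-product sign, and let $\tau\in S_n$ be a permutation realizing this bijection on the underlying index sets (sending the ordered entries of a cycle of $\sigma$ to the ordered entries of the matched cycle of $\pi$); such a $\tau$ satisfies $\tau\sigma\tau^{-1}=\pi$ and matches cycle products, so Proposition \ref{MS} yields that $\mathbf{a}$ and $\mathbf{b}$ are conjugate. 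The only mild subtlety — and the step I would treat most carefully — is checking that the sign-matching bijection can always be chosen to be length-preserving and compatible with a single $\tau$; but this is immediate once the per-length counts of $+1$ and $-1$ cycles agree, so no real obstacle arises and the proof is essentially a bookkeeping translation of Proposition \ref{MS}.
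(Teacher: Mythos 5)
Your proposal is correct and takes essentially the same route as the paper: the paper offers no separate argument, presenting Proposition \ref{labellingconjugacyclasses} as a direct reformulation of Proposition \ref{MS} via the signed-partition labelling, which is precisely the translation you carry out. Your write-up merely makes explicit the bookkeeping (well-definedness of the signed partition and the construction of a length- and sign-preserving $\tau$ in the converse direction) that the paper leaves implicit.
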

\begin{example}
Consider $C_2\wr S_6$. Let $\mathbf{x}=[1,-1,-1,1,1,-1;(145)(26)]\in C_2\wr S_6$. The $\sigma$ here is $(145)(26)$, which is of cycle type $\lambda=[1^12^13^1]$, the cycle products are $x_1x_4x_5=1\cdot 1\cdot 1=1,x_2x_6=(-1)(-1)=1$ and $x_3=-1$. Thus the signed partition associated to $\mathbf{x}$ is $[\overline{1}^12^13^1]$.
\end{example}
In the following example we shall list the conjugacy classes for $C_2\wr S_2$, with their
representatives.
\begin{example}
$(C_2\wr S_2)$. The conjugacy classes of $C_2\wr S_2$ along with a representative from each are listed below

\centering\begin{tabularx}{0.6\textwidth} { 
  | >{\centering\arraybackslash}X 
  | >{\centering\arraybackslash}X 
  | 
    }
 \hline
 Class & Representative\\
 \hline
 $[1^2]$ & $[1,1;e]$ \\
 \hline
 $[1^1\overline{1}^1]$ & $[-1,1;e]$ \\
 \hline
 $[\overline{1}^2]$ & $[-1,-1;e]$ \\
 \hline
 $[2^1]$ & $[1,1;(12)]$ \\
 \hline
 $[\overline{2}^1]$ & $[1,-1;(12)]$ \\
 \hline
\end{tabularx}
\end{example}

\begin{example}
$(C_2\wr S_3)$. The conjugacy classes of $C_2\wr S_3$ are listed below

\centering\begin{tabularx}{0.6\textwidth} { 
  | >{\centering\arraybackslash}X 
  | >{\centering\arraybackslash}X
  | 
    }
 \hline
 Class & Representative \\
 \hline
 $1^3$ & $[1,1,1;e]$ =\\
 \hline
 $\overline{1}^11^2$ & $[-1,1,1;e]$ \\
 \hline
 $\overline{1}^21^1$ & $[1,-1,-1;e]$ \\
 \hline
 $\overline{1}^3$ & $[-1,-1,-1;e]$ \\
 \hline
 $2^11^1$ & $[1,1,1;(12)]$\\
 \hline
 $2^1\overline{1}^1$ & $[1,1,-1;(12)]$\\
 \hline
 $\overline{2}^11^1$ & $[1,-1,1;(12)]$ \\
 \hline
 $\overline{2}^1\overline{1}^1$ & $[1,-1,-1;(12)]$ \\
 \hline
 $3^1$ & $[1,1,1;(123)]$ \\
 \hline
 $\overline{3}^1$ & $[-1,-1,-1;(123)]$\\
 \hline
\end{tabularx}
\end{example}
Now for each conjugacy class of $W_{Sp_{2n}}$, we select some representatives. Before describing them, we shall set the following notations:
\begin{itemize}
    \item $\mathbf{1}_k$ denotes the sequence $(1,1,\ldots,1)$ of $k$ $1$'s.
    \item $-\mathbf{1}_k$ denotes the sequence $(-1,-1,\ldots,-1)$ of $k$ $-1$'s.
    \item $\overline{\mathbf{1}}_k$ denotes the sequence $(-1,\underbrace{1,\ldots,1}_{k-1\text{-copies}})$.
    \item For $k,l,n\in\mathbb{N}$ such that $kl=n$, let $\sigma_{k,l}\in S_n$ denote the permutation $(1,2,\ldots,k)(k+1,k+2,\ldots,2k)\cdots(n-k+1,\ldots n)\in S_n$. In particular, as in the following table, $\sigma_{n,1}$ denotes the $n$-cycle $(1,2,\ldots,n)\in S_n$. When $l=s+t$, $\sigma_{k,s}$ denotes the permutation in $S_{k,s}$ and $\sigma_{k,t}$ denotes the permutation in $S_{k,t}$.
    \item $\pi_k$ denotes the $k$-cycle $(1,2,\ldots,k)\in S_n$.
\end{itemize}
We summarize the centeralizers of some of the basic conjugacy classes computed in \cite[Proposition 3.1 to 3.9]{KS} in the table below. For an abstract group $H$ and an element $h\in H$, we denote by $Z_H(h)$ the centralizer of $h$ in $H$.
\\
\\
\begin{tabularx}{1\textwidth} { 
   >{\hsize=5\hsize\linewidth=\hsize\centering\arraybackslash}X 
  | >
  {\hsize=10\hsize\linewidth=\hsize\centering\arraybackslash}X 
  | >{\hsize=23\hsize\linewidth=\hsize\centering\arraybackslash}X
    }
 \hline
  Conjugacy Class  & Representative & 
  Centralizer  \\
 \hline
 $n^1$  & $[\mathbf{1}_n;\sigma_{n,1}]$ & $\{[\mathbf{1}_n;\sigma^k_{n,1}],[-\mathbf{1}_n;\sigma^k_{n,1}]\ \vert\  k=0,1,\ldots,n-1\}$    \\
 & & \\
 $\overline{n}^1$  &$[-\mathbf{1}_n;\sigma_{n,1}]$ & $\{[\mathbf{1}_n;\sigma^k_{n,1}],[-\mathbf{1}_n;\sigma^k_{n,1}]\ \vert\  k=0,1,\ldots,n-1\}$    \\
 ($n$ odd) & & \\
 $\overline{n}^1$   & $[\overline{\mathbf{1}}_n;\sigma_{n,1}]$ & Cyclic group $\langle[\overline{\mathbf{1}}_n;\sigma_{n,1}]\rangle$ of order $2n$    \\
($n$ even) &  & \\
 $k^l\ (l\ge 2)$ &$[\underbrace{\mathbf{1}_k,\ldots,\mathbf{1}_k}_{l\text{-copies}};\sigma_{k,l}]$ &  $\left\{ [\underbrace{h_1,\ldots,h_1}_{k\text{-copies}},\underbrace{h_2,\ldots,h_2}_{k\text{-copies}},\ldots,\underbrace{h_l,\ldots,h_l}_{k\text{-copies}};\tau]\ \vert\ h_i=\pm 1,\tau\in Z_{S_n}(\sigma_{k,l})\right\}$   \\
  & & \\
 $\makecell{\overline{k}^l (l\ge 2,\\  k\ \text{odd})}$  &$[-\underbrace{\mathbf{1}_k,\ldots,-\mathbf{1}_k}_{l\text{-copies}};\sigma_{k,l}]$ & $\left\{ [\underbrace{h_1,\ldots,h_1}_{k\text{-copies}},\underbrace{h_2,\ldots,h_2}_{k\text{-copies}},\ldots,\underbrace{h_l,\ldots,h_l}_{k\text{-copies}};\tau]\ \vert\ h_i=\pm 1,\tau\in Z_{S_n}(\sigma_{k,l})\right\}$    \\
 & & \\
 $\makecell{\overline{k}^l (l\ge 2,\\  k\ \text{even})}$ &$[\underbrace{\overline{\mathbf{1}}_k,\ldots,\overline{\mathbf{1}}_k}_{l\text{-copies}};\sigma_{k,l}]$ & $\left\{ \bfrac{[\underbrace{-h_1,\ldots,-h_1}_{m_1\text{-copies}},\underbrace{h_1,\ldots,h_1}_{k-m_1\text{-copies}},\ldots,\underbrace{-h_l,\ldots,-h_l}_{m_l\text{-copies}},\underbrace{h_l,\ldots,h_l}_{k-m_l\text{-copies}};\tau]\ \vert }{h_i=\pm 1,\tau=[\pi^{m_1}_k,\ldots,\pi^{m_l}_k;\rho]\in Z_{S_n}(\sigma_{k,l})}\right\}$    \\
 &  & \\
 \makecell{$k^s\overline{k}^t$\\ ($k$ odd)} &$\left[\bfrac{\underbrace{\mathbf{1}_k,\ldots,\mathbf{1}_k,}_{s\text{-copies}}}{-\underbrace{\mathbf{1}_k,\ldots,-\mathbf{1}_k}_{t\text{-copies}}};\sigma_{k,s+t}\right]$ & $\left\{ \bfrac{[\underbrace{h_1,\ldots,h_1}_{k\text{-copies}},\underbrace{h_2,\ldots,h_2}_{k\text{-copies}},\ldots,\underbrace{h_{s+t},\ldots,h_{s+t}}_{k\text{-copies}};\tau_s\tau_t]\ \vert}{h_i=\pm 1,\tau_s\in Z_{S_{sk}}(\sigma_{k,s}),\tau_t\in Z_{S_{tk}}(\sigma_{k,t})}\right\}$    \\
  & & \\
 \makecell{$k^s\overline{k}^t$\\ ($k$ even)}& $\left[\bfrac{\underbrace{\mathbf{1}_k,\ldots,\mathbf{1}_k,}_{s\text{-copies}}}{\underbrace{\overline{\mathbf{1}}_{k},\ldots,\overline{\mathbf{1}}_k}_{t\text{-copies}}};\sigma_{k,s+t}\right]$ & $\left\{ \left[\bfrac{\underbrace{h_1,\ldots,h_1}_{k-\text{-copies}},\ldots,\underbrace{h_s,\ldots,h_s}_{k-\text{-copies}},\underbrace{-h_{s+1},\ldots,-h_{s+1}}_{m_1\text{-copies}},}{\underbrace{h_{s+1},\ldots,h_{s+1}}_{k-m_1\text{-copies}},\ldots,\underbrace{-h_l,\ldots,-h_l}_{m_t\text{-copies}},\underbrace{h_l,\ldots,h_l}_{k-m_t\text{-copies}}};\tau_s\tau_t \right]\vert \bfrac{\tau_s\in Z_{S_{sk}}(\sigma_{k,s})}{\tau_t\in Z_{S_{tk}}(\sigma_{k,t})}\right\}$    \\
  & & \\
\hline
\end{tabularx}

More generally, we have the following result (\cite[Theorem 3.10]{KS}).
\begin{fact}[Kaur-Sharma]\label{centralizer}
Let $[k_1^{k_1}\cdots k_u^{l_u}m_1^{r_1}\cdots m_v^{r_v}]$ be a partition of $n$, where $k_1<\cdots<k_u$ are odd numbers, and $m_1<\cdots<m_v$ are even numbers. For $1\le i\le u$, let $s_i,t_i$ be nonnegative integers such that $s_i+t_i=l_i$, and likewise for $1\le j\le v$, we take $x_j+y_j=r_j$. Now, consider the conjugacy class of $C_2\wr S_n$ given by the following signed partition:
\[
[
k_1^{s_1}\overline{k}_1^{t_1}\cdots k_u^{s_u}\overline{k}_u^{t_u}m_1^{x_1}\overline{m}_1^{y_1}\cdots m_v^{x_v}\overline{m}_v^{y_v}
]
.
\]
Let us choose the following representative from this conjugacy class:
\[
\mathbf{a}=[\underbrace{\mathbf{1}_{k_{1}},\ldots,\mathbf{1}_{k_{1}}}_{s_{1}\text{ times }},\underbrace{\tilde{\mathbf{1}}_{k_{1}},\ldots,\tilde{\mathbf{1}}_{k_{1}}}_{t_{1}\text{ times}},\ldots,\underbrace{\mathbf{1}_{m_{v}},\ldots\mathbf{1}_{m_{v}}}_{x_{v}\text{ times}},\underbrace{\tilde{\mathbf{1}}_{m_{v}},\ldots,\tilde{\mathbf{1}}_{m_{v}}}_{y_{v}\text{ times}};\sigma]
\]
where $\tilde{\mathbf{1}}_k=-\mathbf{1}_k$ (resp. $\overline{\mathbf{1}}_k$) if $k$ is odd (resp. even) and $\sigma\in S_{n}$ is defined as
\[
(1,\ldots,k_{1})\ldots(k_{1}s_{1}+k_{1}(t_{1}-1)+1,\ldots,k_{1}s_{1}+k_{1}t_{1})\ldots(n-m_{v}+1,\ldots,n).
\]
Then the centralizer $Z_{C_2\wr S_n}(\mathbf{a})$ is isomorphic to
\[\left(\prod_{i=1}^u\left(Z_{C_2\wr S_{s_ik_i}}(k_i^{s_i})\times Z_{C_2\wr S_{t_ik_i}}(\overline{k}_i^{t_i})\right)\right)\times\left(\prod_{j=1}^v\left(Z_{C_2\wr S_{x_jm_j}}(m_j^{x_j})\times Z_{C_2\wr S_{y_jm_j}}(\overline{m}_j^{y_j})\right)\right).\]
Moreover, this centralizer is isomorphic to the following group
\[\left(\prod_{i=1}^u\left((C_2\times C_{k_i})\wr S_{s_i}\right)\times \left((C_2\times C_{k_i})\wr S_{t_i}\right)\right)\times\left(\prod_{j=1}^v\left((C_2\times C_{m_j})\wr S_{x_j}\right)\times \left((C_2\times C_{m_j})\wr S_{y_j}\right)\right).\]
\end{fact}
For $n\geq 2$, let $A_{n}$ and $B_{n}$ denote the following $n\times n$ matrices:
\[
A_{n}=
\begin{bmatrix}
    0&0&\ldots&0&1
    \\
    1&0&\ldots&0&0
    \\
    0&1&\ldots&0&0
    \\
    \vdots&&\ddots&0&0
    \\
    0&\ldots&0&1&0
\end{bmatrix}
,\quad
B_{n}=
\begin{bmatrix}
    0&0&\ldots&0&-1
    \\
    1&0&\ldots&0&0
    \\
    0&1&\ldots&0&0
    \\
    \vdots&&\ddots&0&0
    \\
    0&\ldots&0&1&0
\end{bmatrix}
\]
We define $A_{1}=[1]$ and $B_{1}=[-1]$.

We need the following lemma, which will be used in obtaining the desired formulas.
\begin{lemma}\label{determinant}
 We have $\det(I-xA_{n})=1-x^{n}$ and $\det(I-xB_{n})=x^{n}+1$.   
\end{lemma}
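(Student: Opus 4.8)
The plan is to evaluate both determinants directly by Laplace (cofactor) expansion along the first row, exploiting the fact that the first row of each matrix has only two nonzero entries. Writing out $I-xA_n$ explicitly, it carries $1$'s on the main diagonal, $-x$'s on the subdiagonal, a single $-x$ in the top-right corner $(1,n)$, and zeros elsewhere; thus its first row contributes only the entry $1$ at position $(1,1)$ and the entry $-x$ at position $(1,n)$.

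Expanding along this row, I would identify the two relevant minors. Deleting row $1$ and column $1$ leaves an $(n-1)\times(n-1)$ triangular matrix with $1$'s on the diagonal, of determinant $1$. Deleting row $1$ and column $n$ leaves an $(n-1)\times(n-1)$ triangular matrix with $-x$'s on the diagonal, of determinant $(-x)^{n-1}$. Attaching the cofactor signs $(-1)^{1+1}$ and $(-1)^{1+n}$ gives
\[
\det(I-xA_n)=1\cdot 1+(-x)(-1)^{1+n}(-x)^{n-1}=1+(-1)^{1+n}(-x)^n=1-x^n,
\]
the last step using $(-1)^{1+n}(-1)^n=-1$. For $B_n$ the computation is identical except that, since $(B_n)_{1,n}=-1$, the top-right entry of $I-xB_n$ is $+x$ instead of $-x$, while the subdiagonal and hence both minors are unchanged; this flips a single sign and yields
\[
\det(I-xB_n)=1\cdot 1+(x)(-1)^{1+n}(-x)^{n-1}=1+(-1)^{2n}x^n=1+x^n.
\]
Finally I would record the base case $n=1$ from the definitions $A_1=[1]$, $B_1=[-1]$, which give $1-x$ and $1+x$, matching the formulas.

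The only genuinely delicate point is the sign bookkeeping in the cofactor expansion---keeping track of $(-1)^{1+n}$ against the $(-x)^{n-1}$ produced by the minor at position $(1,n)$---so I would verify the relevant parities explicitly. As an independent check I would note that $A_n$ is the matrix of the cyclic permutation $e_1\mapsto e_2\mapsto\cdots\mapsto e_n\mapsto e_1$, so its characteristic polynomial is $\lambda^n-1$, whereas $B_n$ satisfies $B_n^n=-I$ and therefore has characteristic polynomial $\lambda^n+1$; substituting into $\det(I-xM)=x^n\,p_M(1/x)$ recovers $1-x^n$ and $1+x^n$ respectively and confirms the sign computation.
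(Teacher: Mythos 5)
Your proof is correct. The cofactor expansion along the first row is carried out accurately: the two minors are triangular with determinants $1$ and $(-x)^{n-1}$ respectively, and your sign bookkeeping $(-1)^{1+n}(-x)^n = -x^n$ (resp.\ $(-1)^{1+n}\cdot x\cdot(-x)^{n-1}=x^n$ for $B_n$) checks out, as does the $n=1$ base case. The paper takes a slightly slicker route: it observes that $A_n$ is the companion matrix of $\lambda^n-1$, so $\det(\lambda I - A_n)=\lambda^n-1$, and then obtains $\det(I-xA_n)$ by the substitution $\lambda=x^{-1}$ via $x^n\det(x^{-1}I-A_n)=\det(I-xA_n)$ (and similarly for $B_n$ as the companion matrix of $\lambda^n+1$). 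This is essentially the ``independent check'' you append at the end, so you have in effect given both arguments; your primary one is more elementary and self-contained, while the paper's avoids any explicit minor computation by quoting the standard fact about companion matrices. Either is fully adequate for the lemma.
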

\begin{proof}
    The proof when $n=1$ is straightforward, so let us assume that $n\geq 2$. We note that $A_{n}$ is the companion matrix of the polynomial $p(\lambda)=\lambda^{n}-1$, therefore $\det(\lambda I-A_{n})=\lambda^{n}-1$. Now the desired formula for $A_{n}$ follows upon substituting $\lambda=x^{-1}$ in $\lambda^{n}\det(I-\lambda^{-1}A)=\lambda^{n}-1$. The proof of $\det(I-xB_{n})=x^{n}+1$ is similar. 
\end{proof}
The statement (without a proof) of the following theorem is given in \cite[Theorem 3]{FNSZ}.
\begin{theorem}\label{sp2n}
Let $\Gamma$ be a finitely generated nilpotent group of abelian rank $r\geq 1$. The mixed Hodge polynomial of $\mathcal{M}_{\mathbb{Z}^r}Sp_{2n}$ has the following explicit form:
    \[
    \mu_{\mathcal{M}_{\Gamma}^{0}Sp_{2n}}(t,u,v)=\mu_{\mathcal{M}_{\mathbb{Z}^r}Sp_{2n}}(t,u,v)
    =
    \sum_{\lambda^{\pm}\in\mathcal{P}^{\pm}_n}\frac{\prod_{i=1}^{n}(1-(-tuv)^{i})^{a_{i}r}\prod_{j=1}^{n}(1+(-tuv)^{j})^{b_{j}r}}{\prod_{i=1}^{n}(2i)^{a_{i}}a_{i}!\prod_{j=1}^{n}(2j)^{b_{j}}b_{j}!},
    \]
    where $a_{i}$ (resp. $b_{i}$) is the number of parts with positive sign (resp. negative sign) of $\lambda^{\pm}$ equal to $i$.
\end{theorem}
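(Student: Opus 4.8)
The plan is to evaluate the Florentino--Silva formula of Fact \ref{generalform} explicitly for $G=Sp_{2n}$. First I would dispense with the normalization: by Theorem \ref{quotientoftorus}(iii) the variety $\mathcal{M}_{\mathbb{Z}^r}Sp_{2n}$ is irreducible, and by Remark \ref{normal}(i) it is normal, so $\mathcal{M}_{\mathbb{Z}^r}Sp_{2n}=\mathcal{M}^*_{\mathbb{Z}^r}Sp_{2n}$ and Fact \ref{generalform} applies verbatim with $W_{Sp_{2n}}\cong C_2\wr S_n$. Since the summand $(\det(I+tuvA_w))^r$ is a class function of $w$ (characteristic polynomials are conjugation invariant) and $|W_{Sp_{2n}}|=2^n n!$, I would regroup the sum over conjugacy classes. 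Using $|C|/|W|=1/|Z_{W}(w)|$ together with the bijection between conjugacy classes and signed partitions (Proposition \ref{labellingconjugacyclasses}), the formula becomes $\sum_{\lambda^\pm\in\mathcal{P}^\pm_n}|Z_W(w_{\lambda^\pm})|^{-1}\,(\det(I+tuvA_{w_{\lambda^\pm}}))^r$ for fixed representatives $w_{\lambda^\pm}$.

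The denominator comes directly from Theorem \ref{centralizer}: the centralizer of a representative of the class $\lambda^\pm$ is a product of wreath products $(C_2\times C_i)\wr S_{a_i}$ and $(C_2\times C_j)\wr S_{b_j}$, whose orders are $(2i)^{a_i}a_i!$ and $(2j)^{b_j}b_j!$ respectively. Taking the product yields $|Z_W(w_{\lambda^\pm})|=\prod_{i}(2i)^{a_i}a_i!\prod_{j}(2j)^{b_j}b_j!$, which is exactly the denominator in the stated formula.

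For the numerator I would compute $\det(I+tuvA_{w_{\lambda^\pm}})$, where $A_{w_{\lambda^\pm}}$ is the automorphism induced on $H^1(T_{Sp_{2n}},\mathbb{C})\cong\mathbb{C}^n$. The Weyl group acts on the character lattice by signed permutations: the $S_n$-factor permutes the $n$ coordinates while each $C_2$-factor sends the corresponding coordinate to its negative, dual to inverting a torus entry. For the representative $w_{\lambda^\pm}$, this signed permutation is block diagonal after reordering the basis, with one block per cycle of the underlying permutation. A cycle of length $k$ with cycle product $+1$ yields a block conjugate to $A_k$, and one with cycle product $-1$ yields a block conjugate to $B_k$; this matches the sign data recorded in the signed partition, since a cyclic signed permutation matrix of size $k$ has characteristic polynomial $\lambda^k-(\text{product of signs})$. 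Substituting $x=-tuv$ into Lemma \ref{determinant} gives the factor $1-(-tuv)^k$ for each positive part of size $k$ and $1+(-tuv)^k$ for each negative part. Taking the product over all cycles and raising to the $r$-th power produces the numerator $\prod_{i}(1-(-tuv)^i)^{a_i r}\prod_{j}(1+(-tuv)^j)^{b_j r}$.

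I expect the main obstacle to be the identification carried out in the third paragraph: translating the wreath-product action on $T_{Sp_{2n}}$ into the signed-permutation action on $H^1$, and verifying that each cycle block is genuinely conjugate to $A_k$ or $B_k$ according to its cycle product. The determinant depends only on the product of signs around a cycle, so the key point is that the conjugacy-class invariant furnished by Proposition \ref{MS} (the cycle product) is precisely what selects between $1-(-tuv)^k$ and $1+(-tuv)^k$. Once this block decomposition is in place, assembling the three ingredients and summing over $\mathcal{P}^\pm_n$ is routine.
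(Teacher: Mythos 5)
Your proposal is correct and follows essentially the same route as the paper: regroup the Florentino--Silva sum over conjugacy classes via Proposition \ref{labellingconjugacyclasses}, read off the centralizer orders from Theorem \ref{centralizer}, and compute the block-diagonal determinant via Lemma \ref{determinant}. Your observation that the determinant of each cycle block depends only on the cycle product even slightly streamlines the paper's argument, which instead fixes explicit representatives ($-\mathbf{1}_k$ for $k$ odd, $\overline{\mathbf{1}}_k$ for $k$ even) and splits into parity cases; you also make explicit the irreducibility/normality step that the paper leaves implicit.
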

\begin{proof}
Since $\det(I+tuvA_{w})=\det(I+tuvA_{w'})$ if $w$ and $w'$ are conjugate, using Proposition \ref{labellingconjugacyclasses} and Fact \ref{quotientoftorus}, the formula in Fact \ref{generalform} for $G=Sp_{2n}$ can be written as
\[
    \mu_{\mathcal{M}_{\Gamma}^{0}Sp_{2n}}(t,u,v)
    =
    \mu_{\mathcal{M}_{\mathbb{Z}^r}Sp_{2n}}(t,u,v)
    =
    \sum_{\lambda^{\pm}\in\mathcal{P}^{\pm}_n}\frac{\big(\det(I+tuvA_{w_{\lambda^{\pm}}})\big)^{r}}{|Z_{W_{Sp_{2n}}}(w_{\lambda^{\pm}})|},
    \]
where for each $\lambda^{\pm}=[1^{a_{1}}\overline{1}^{b_{1}}2^{a_{2}}\overline{2}^{b_{2}}\ldots n^{a_{n}}\overline{n}^{b_{n}}]\in\mathcal{P}^{\pm}_n$ we choose a representative $w_{\lambda^{\pm}}\in W_{Sp_{2n}}$ of the conjugacy class corresponding to $\lambda^{\pm}$ defined as follows:
\[
w_{\lambda^{\pm}}=[\underbrace{\mathbf{1}_1,\ldots,\mathbf{1}_1}_{a_{1}\text{ times }},\underbrace{\tilde{\mathbf{1}}_1,\ldots,\tilde{\mathbf{1}}_1}_{b_{1}\text{ times}},\ldots,\underbrace{\mathbf{1}_n,\ldots\mathbf{1}_n}_{a_{n}\text{ times}},\underbrace{\tilde{\mathbf{1}}_n,\ldots,\tilde{\mathbf{1}}_n}_{b_{n}\text{ times}};\sigma_{\lambda^{\pm}}]
\]
where $\tilde{\mathbf{1}}_k=-\mathbf{1}_k$ (resp. $\overline{\mathbf{1}}_k$) if $k$ is odd (resp. even) and $\sigma_{\lambda^{\pm}}\in S_{n}$ is defined as
\[
(1)(2)\ldots(a_{1}+b_{1})(a_{1}+b_{1}+1,a_{1}+b_{1}+2)\ldots(a_{1}+b_{1}+2a_{2}+2b_{2}-1,a_{1}+b_{1}+2a_{2}+2b_{2}),\ldots,(n-m+1,\ldots,n),
\]
that is, $\sigma_{\lambda^{\pm}}\in S_{n}$ is a permutation with partition type $[1^{a_{1}+b_{1}}\ldots n^{a_{n}+b_{n}}]$.

The automorphism of $H^{1}(T_{Sp_{2n}},\mathbb{C})$ induced by the action of $w_{\lambda^{\pm}}$ on $T_{Sp_{2n}}$ can be written in the following block matrix form
\[
\begin{bmatrix}
    A_{1}&0&\cdots&0&0&0&0&0&0&0
    \\
    0&\ddots&0&0&0&0&0&0&0&0
    \\
    0&\cdots&A_{1}&0&0&0&0&0&0&0
    \\
    0&\cdots&0&\tilde{B_{1}}&0&0&0&0&0&0
    \\
    0&\cdots&0&0&\ddots&0&0&0&0&0
    \\
    0&\cdots&0&0&0&\tilde{B_{1}}&0&0&0&0
    \\
    0&\cdots&0&0&0&0&\ddots&0&0&0
    \\
    0&\cdots&0&0&0&0&\cdots&A_{n}&0&0
    \\
    0&\cdots&0&0&0&0&0&0&\ddots&0
    \\
    0&\cdots&0&0&0&0&0&0&\cdots&\tilde{B_{n}},
\end{bmatrix}
\]
where $\tilde{B}_{k}=-A_{k}$ (resp. $B_{k}$) if $k$ is odd (resp. even).
Thus by Lemma \ref{determinant}, we have
\[
\det(I+tuvA_{w_{\lambda^{\pm}}})=\prod_{i=1}^{n}(1-(-tuv)^{i})^{a_{i}}\prod_{j=1}^{n}(1+(-tuv)^{j})^{b_{j}}.
\]
By Fact \ref{centralizer}, we have $|Z_{W_{Sp_{2n}}}(w_{\lambda^{\pm}})|=\prod_{i=1}^{n}(2i)^{a_{i}}a_{i}!\prod_{j=1}^{n}(2j)^{b_{j}}b_{j}!$ and the result follows.
\end{proof}
\subsection{Odd orthogonal groups}
Let $(V,\psi)$ be a vector space over $\mathbb{C}$ of dimension $2n+1$ equipped with a non-degenerate symmetric bilinear form $\psi$ on $V$. The group $SO_{2n+1}$ is defined to be the group of $A\in SL(V)$ such that $A$ preserves $\psi$. It is known that over $\mathbb{C}$ all such spaces $(V,\psi)$ are isomorphic and therefore we fix our $V$ to be $\mathbb{C}^{2n+1}$ with  $\psi(e_{i},e_{i+n+1})=\psi(e_{i+n+1},e_{i})=1$, $1\leq i\leq n$, $\psi(e_{n+1},e_{n+1})=1$ and $0$ otherwise; we identify $SO_{2n+1}$ with $A\in SL_{2n+1}(\mathbb{C})$ such that $A K A^t=K$, where
\[
K=\begin{bmatrix}
0
&
0
&
I_{n}
\\
0
&
1
&
0
\\
I_{n}
&
0
&
0
\end{bmatrix}.
\]
We will work with this matrix version of odd orthogonal groups from now.

We choose our maximal torus $T_{SO_{2n+1}}$ of $SO_{2n+1}$ to be the standard one:
\[
T_{SO_{2n+1}}=\{\diag(a_{1},\ldots,a_{n},1,a_{1}^{-1},\ldots,a_{n}^{-1}):a_{i}\in\mathbb{C}^{\times}, 1\leq i\leq n\}.
\]
The Weyl group $W_{SO_{2n+1}}\cong N_{SO_{2n+1}}(T_{SO_{2n+1}})/T_{SO_{2n+1}}$ acts on $T_{SO_{2n+1}}$ by fixing 
$(n+1)\times(n+1)$-st entry, permuting $a_{1},\ldots,a_{n}$ (and $a_{1}^{-1},\ldots,a_{n}^{-1}$ accordingly, that is, $\sigma\cdot a_{i}^{-1}=a_{\sigma(i)}^{-1}$) and inverting some of the entries of $(a_{1},\ldots,a_{n})$ and the corresponding entries of $(a_{1}^{-1},\ldots,a_{n}^{-1})$. Thus, $W_{SO_{2n}}$ can be identified with the wreath product $C_2\wr S_n$. 

The natural morphism
\[
\phi: T_{Sp_{2n}}\rightarrow T_{SO_{2n+1}}
\]
\[
\diag(a_{1},\ldots,a_{n},a_{1}^{-1},\ldots,a_{n}^{-1})\mapsto \diag(a_{1},\ldots,a_{n},1,a_{1}^{-1},\ldots,a_{n}^{-1})
\]
is an isomorphism and satisfies $\phi(\mathbf{a}\cdot A)=\mathbf{a}\cdot\phi(A)$, $\mathbf{a}\in C_{2}\wr S_n$, $A\in T_{Sp_{2n}}$.
By Fact \ref{generalform} and Theorem \ref{sp2n}, we have
\begin{corollary}
Let $\Gamma$ be a finitely generated nilpotent group of abelian rank $r\geq 1$. The mixed Hodge polynomial of $\mathcal{M}^{0}_{\Gamma}SO_{2n+1}$ has the following explicit form:
    \[
    \mu_{\mathcal{M}_{\Gamma}^{0}SO_{2n+1}}(t,u,v)
    =
    \mu_{\mathcal{M}^{0}_{\mathbb{Z}^r}SO_{2n+1}}(t,u,v)=\sum_{\lambda^{\pm}\in\mathcal{P}^{\pm}_n}\frac{\prod_{i=1}^{n}(1-(-tuv)^{i})^{a_{i}r}\prod_{j=1}^{n}(1+(-tuv)^{i})^{b_{j}r}}{\prod_{i=1}^{n}(2i)^{a_{i}}a_{i}!\prod_{j=1}^{n}(2j)^{b_{j}}b_{j}!}
    .
    \]
\end{corollary}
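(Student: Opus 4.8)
The plan is to reduce the computation to the symplectic case of Theorem \ref{sp2n} by observing that all of the Weyl-group data feeding into Fact \ref{generalform} is the same for $SO_{2n+1}$ and for $Sp_{2n}$. First I would invoke Remark \ref{normal}(i): since $SO_{2n+1}$ is a classical group, its identity component $\mathcal{M}^{0}_{\mathbb{Z}^r}SO_{2n+1}$ is normal, hence coincides with its normalization $\mathcal{M}^{*}_{\mathbb{Z}^r}SO_{2n+1}$. This allows Fact \ref{generalform} to be applied directly to $\mathcal{M}^{0}_{\mathbb{Z}^r}SO_{2n+1}$, giving
\[
\mu_{\mathcal{M}^{0}_{\mathbb{Z}^r}SO_{2n+1}}(t,u,v)=\frac{1}{|W_{SO_{2n+1}}|}\sum_{w\in W_{SO_{2n+1}}}\big(\det(I+tuvA_w)\big)^r,
\]
where $A_w$ is the automorphism of $H^1(T_{SO_{2n+1}},\mathbb{C})$ induced by $w$.

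The heart of the argument is to identify this average term-by-term with the one for $Sp_{2n}$. Both $W_{Sp_{2n}}$ and $W_{SO_{2n+1}}$ are identified with $C_2\wr S_n$, so in particular $|W_{SO_{2n+1}}|=|W_{Sp_{2n}}|$. The natural morphism $\phi:T_{Sp_{2n}}\to T_{SO_{2n+1}}$ is an isomorphism satisfying $\phi(\mathbf{a}\cdot A)=\mathbf{a}\cdot\phi(A)$ for all $\mathbf{a}\in C_2\wr S_n$, i.e.\ it is $W$-equivariant. Hence the induced linear isomorphism on first cohomology intertwines the operator $A_w$ on the $SO_{2n+1}$ side with the operator induced by the same element $w\in C_2\wr S_n$ on the $Sp_{2n}$ side; since intertwined operators share a characteristic polynomial, the value of $\det(I+tuvA_w)$ is unchanged. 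Summing over $w$ therefore yields
\[
\mu_{\mathcal{M}^{0}_{\mathbb{Z}^r}SO_{2n+1}}(t,u,v)=\mu_{\mathcal{M}_{\mathbb{Z}^r}Sp_{2n}}(t,u,v),
\]
using that $\mathcal{M}_{\mathbb{Z}^r}Sp_{2n}=\mathcal{M}^{0}_{\mathbb{Z}^r}Sp_{2n}$ by Theorem \ref{quotientoftorus}(iii).

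The desired explicit partition-type formula is then immediate from Theorem \ref{sp2n}. I do not expect any serious obstacle here: the only point demanding a line of care is that the $W$-equivariance of $\phi$ on the torus descends to an intertwining of the induced actions on $H^1(T,\mathbb{C})$, which is automatic once one recalls that $H^1(T,\mathbb{C})$ is naturally identified with the complexified character lattice of $T$ and that $\phi$ is an isomorphism of algebraic tori respecting this lattice structure. Everything else is a direct substitution into the symplectic formula already established.
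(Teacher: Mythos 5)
Your proposal is correct and follows essentially the same route as the paper: the paper's proof consists precisely of the $C_2\wr S_n$-equivariant isomorphism $\phi:T_{Sp_{2n}}\to T_{SO_{2n+1}}$ combined with Fact \ref{generalform}, Remark \ref{normal}(i), and Theorem \ref{sp2n}, which is exactly the chain of reasoning you spell out. Your write-up merely makes explicit the intermediate step that equivariance of $\phi$ forces the determinants $\det(I+tuvA_w)$ to agree term-by-term, which the paper leaves implicit.
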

\section{Even orthogonal groups}
Let $(V,\psi)$ be a vector space over $\mathbb{C}$ of dimension $2n$ equipped with a non-degenerate symmetric bilinear form $\psi$ on $V$. The group $SO_{2n}$ is defined to be the group of $A\in SL(V)$ such that $A$ preserves $\psi$. It is known that over $\mathbb{C}$ all such spaces $(V,\psi)$ are isomorphic and therefore we fix our $V$ to be $\mathbb{C}^{2n}$ with $\psi(e_{i},e_{i+n})=\psi(e_{i+n},e_{i})=1$ if $1\leq i\leq n$ and $0$ otherwise; we identify $SO_{2n}$ with $A\in SL_{2n}(\mathbb{C})$ such that $A L A^t=L$, where
\[
L=\begin{bmatrix}
0
&
I_{n}
\\
I_{n}
&
0
\end{bmatrix}.
\]
We will work with this matrix version of even orthogonal groups from now.

We choose our maximal torus $T$ of $SO_{2n}$ to be the standard one:
\[
T=\{\diag(a_{1},\ldots,a_{n},a_{1}^{-1},\ldots,a_{n}^{-1}):a_{i}\in\mathbb{C}^{\times}, 1\leq i\leq n\}.
\]

The Weyl group $W_{SO_{2n}}\cong N_{SO_{2n}}(T)/T$ acts on $T$ by  permuting $a_{1},\ldots,a_{n}$ (and $a_{1}^{-1},\ldots,a_{n}^{-1}$ accordingly, that is, $\sigma\cdot a_{i}^{-1}=a_{\sigma(i)}^{-1}$) and inverting an even number of entries of $(a_{1},\ldots,a_{n})$ and the corresponding entries of $(a_{1}^{-1},\ldots,a_{n}^{-1})$. Thus, $W_{SO_{2n}}$ can be identified with a subgroup of $C_2\wr S_n$. In fact we can describe this subgroup as follows, $\mathbf{a}= [a_1,\ldots,a_n;\sigma]\in C_2\wr S_n$ lies in $W_{SO_{2n}}$ if and only if, $\prod_{i=1}^na_i = 1$ (see \cite{Car,LFV}). We identify $W_{SO_{2n}}$ with this index $2$ subgroup of $C_2\wr S_n$. 
Consequently we get that a conjugacy class of $C_2\wr S_n$ lies in $W_{SO_{2n}}$ if and only if its associated signed partition has even number of negative terms counting with multiplicity. We will denote the collection of signed partitions of $n$ with an even number of negative terms counting with multiplicity by  $\mathcal{P}_{n}^{\pm,0}$. 

We will need the following lemma.
\begin{lemma}\label{index2}
    Let $\mathbf{a}\in W_{SO_{2n}}$, then either $Z_{W_{SO_{2n}}}(\mathbf{a})=Z_{C_2\wr S_n}(\mathbf{a})$ or $Z_{W_{SO_{2n}}}(\mathbf{a})$ is an index $2$ subgroup of $Z_{C_2\wr S_n}(\mathbf{a})$.
\end{lemma}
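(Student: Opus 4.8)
The plan is to realize $W_{SO_{2n}}$ as the kernel of a single group homomorphism onto $C_2$, and then to read off the dichotomy from the elementary fact that every subgroup of $C_2$ has order $1$ or $2$. This avoids any case analysis over the centralizer descriptions of Theorem \ref{centralizer}.

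First I would introduce the total sign map $\epsilon: C_2\wr S_n\to C_2$ defined by $\epsilon([a_1,\ldots,a_n;\sigma])=\prod_{i=1}^n a_i$, and check it is a homomorphism. Using the multiplication rule $\mathbf{a}\mathbf{b}=[a_1b_{\sigma^{-1}(1)},\ldots,a_nb_{\sigma^{-1}(n)};\sigma\pi]$ recalled above, one computes
\[
\epsilon(\mathbf{a}\mathbf{b})=\prod_{i=1}^n a_i b_{\sigma^{-1}(i)}=\Big(\prod_{i=1}^n a_i\Big)\Big(\prod_{i=1}^n b_{\sigma^{-1}(i)}\Big),
\]
and since $i\mapsto\sigma^{-1}(i)$ is a bijection of $\{1,\ldots,n\}$, the second factor equals $\prod_{j}b_j$, so $\epsilon(\mathbf{a}\mathbf{b})=\epsilon(\mathbf{a})\epsilon(\mathbf{b})$. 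By the description of $W_{SO_{2n}}$ recorded before the lemma, $\mathbf{a}\in W_{SO_{2n}}$ if and only if $\prod_i a_i=1$, i.e. $W_{SO_{2n}}=\ker\epsilon$.

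Next, fix $\mathbf{a}\in W_{SO_{2n}}$. Since $W_{SO_{2n}}$ is a subgroup of $C_2\wr S_n$, its centralizer there is $Z_{W_{SO_{2n}}}(\mathbf{a})=Z_{C_2\wr S_n}(\mathbf{a})\cap W_{SO_{2n}}=Z_{C_2\wr S_n}(\mathbf{a})\cap\ker\epsilon$. In other words, $Z_{W_{SO_{2n}}}(\mathbf{a})$ is exactly the kernel of the restricted homomorphism $\epsilon|_{Z_{C_2\wr S_n}(\mathbf{a})}\colon Z_{C_2\wr S_n}(\mathbf{a})\to C_2$.

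Finally, the image of this restriction is a subgroup of $C_2$, hence of order $1$ or $2$. If the image is trivial, then $Z_{C_2\wr S_n}(\mathbf{a})\subseteq\ker\epsilon=W_{SO_{2n}}$, so $Z_{W_{SO_{2n}}}(\mathbf{a})=Z_{C_2\wr S_n}(\mathbf{a})$. If the image is all of $C_2$, then by the first isomorphism theorem its kernel $Z_{W_{SO_{2n}}}(\mathbf{a})$ has index $2$ in $Z_{C_2\wr S_n}(\mathbf{a})$, completing the dichotomy. There is no serious obstacle here; the only points needing care are the verification that $\epsilon$ is a homomorphism and that $W_{SO_{2n}}=\ker\epsilon$, both of which are immediate from the multiplication rule and the description already given in the text.
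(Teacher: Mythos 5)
Your proof is correct and is essentially the paper's argument in slightly different packaging: the paper applies the second isomorphism theorem to the index-$2$ (hence normal) subgroup $W_{SO_{2n}}\subset C_2\wr S_n$, while you unpack that same fact by exhibiting $W_{SO_{2n}}$ as the kernel of the explicit sign homomorphism $\epsilon$ and applying the first isomorphism theorem to its restriction to the centralizer. Both hinge on the identical observations that $Z_{W_{SO_{2n}}}(\mathbf{a})=Z_{C_2\wr S_n}(\mathbf{a})\cap W_{SO_{2n}}$ and that the resulting quotient embeds in a group of order $2$.
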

\begin{proof}
    Since $W_{SO_{2n}}$ is a index $2$ subgroup of $C_2\wr S_n$ (and thus normal), from the second isomorphism theorem of groups we have
    \[
    Z_{C_2\wr S_n}(\mathbf{a})/(Z_{C_2\wr S_n}(\mathbf{a})\cap W_{SO_{2n}})
    \cong
    Z_{C_2\wr S_n}(\mathbf{a})W_{SO_{2n}}/W_{SO_{2n}}.
    \]
    Since $(Z_{C_2\wr S_n}(\mathbf{a})\cap W_{SO_{2n}})=Z_{W_{SO_{2n}}}(\mathbf{a})$ and $Z_{C_2\wr S_n}(\mathbf{a})W_{SO_{2n}}$ is either $C_2\wr S_n$ or $W_{SO_{2n}}$, the lemma follows.
\end{proof}
\begin{example}
\begin{enumerate}[(i)]
    \item If $\mathbf{a}\in W_{SO_{2n}}$ corresponds to the signed partition $k^{l}$, $k$ even, then $Z_{W_{SO_{2n}}}(\mathbf{a})=Z_{C_2\wr S_n}(\mathbf{a})$.
    \item When $\mathbf{a}\in W_{SO_{2n}}$ corresponds to the signed partition $k^{s}\overline{k}^{t}$, where either $k$ is odd, or $k$ is even and $t>0$, then $Z_{W_{SO_{2n}}}(\mathbf{a})$ is an index $2$ subgroup of $Z_{C_2\wr S_n}(\mathbf{a})$.
\end{enumerate}
\end{example}
\begin{theorem}
Let $\Gamma$ be a finitely generated nilpotent group of abelian rank $r\geq 1$.
The mixed Hodge polynomial $\mu_{\mathcal{M}^{0}_{\Gamma}SO_{2n}}(t,u,v)$ of $\mathcal{M}^{0}_{\Gamma}SO_{2n}$ has an explicit form as:
    \[
\sum_{\substack{\lambda^{\pm}\in\mathcal{P}^{\pm,0}_n:\\i\text{ even }:b_{i}=0\\i\text{ odd }:a_{i}=b_{i}=0}}\frac{\prod_{i=1, i\text{ even}}^{n}(1-(tuv)^{i})^{a_{i}r}}{\prod_{i=1,i\text{ even}}^{n}(2i)^{a_{i}}a_{i}!}
+
\sum_{\substack{\lambda^{\pm}\in\mathcal{P}^{\pm,0}_n:\\\exists i\text{ even }:b_{i\neq 0}\text{ or}\\\exists i\text{ odd }:(a_{i}b_{i})\neq(0,0)}}2\quad\frac{\prod_{i=1}^{n}(1-(-tuv)^{i})^{a_{i}r}\prod_{j=1}^{n}(1+(-tuv)^{j})^{b_{j}r}}{\prod_{i=1}^{n}(2i)^{a_{i}}a_{i}!\prod_{j=1}^{n}(2j)^{b_{j}}b_{j}!}
    .
    \]
\end{theorem}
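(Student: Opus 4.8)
\section*{Proof proposal}

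The plan is to run the same conjugacy-class bookkeeping that established Theorem \ref{sp2n}, but now for the index-$2$ subgroup $W_{SO_{2n}}\subset C_2\wr S_n$, where the new feature is that a single $C_2\wr S_n$-class lying in $W_{SO_{2n}}$ may break into two $W_{SO_{2n}}$-classes. By Remark \ref{normal}(i) we have $\mathcal{M}^*_{\mathbb{Z}^r}SO_{2n}=\mathcal{M}^0_{\mathbb{Z}^r}SO_{2n}$, so Fact \ref{generalform} applies with $W_G=W_{SO_{2n}}$ and gives
\[
\mu_{\mathcal{M}^0_{\mathbb{Z}^r}SO_{2n}}(t,u,v)=\frac{1}{|W_{SO_{2n}}|}\sum_{w\in W_{SO_{2n}}}\big(\det(I+tuvA_w)\big)^r.
\]
First I would rewrite this as a sum over $W_{SO_{2n}}$-conjugacy classes. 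The quantity $\det(I+tuvA_w)$ is a class function; in fact $A$ extends to the signed-permutation representation of all of $C_2\wr S_n$ on $H^1(T,\mathbb{C})$, so $\det(I+tuvA_w)$ is constant on $C_2\wr S_n$-classes. Hence the determinant attached to a signed partition $\lambda^\pm\in\mathcal{P}^{\pm,0}_n$ is exactly the one computed in Theorem \ref{sp2n}, namely $\prod_i(1-(-tuv)^i)^{a_i}\prod_j(1+(-tuv)^j)^{b_j}$, obtained from the block form and Lemma \ref{determinant}; for even positive parts this collapses to $\prod_{i\text{ even}}(1-(tuv)^i)^{a_i}$ since $(-tuv)^i=(tuv)^i$.

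The heart of the matter is the passage from $C_2\wr S_n$-classes to $W_{SO_{2n}}$-classes. The $C_2\wr S_n$-classes meeting $W_{SO_{2n}}$ are precisely those indexed by $\mathcal{P}^{\pm,0}_n$. For each such $\lambda^\pm$ I would invoke Lemma \ref{index2}: either $Z_{W_{SO_{2n}}}(w_{\lambda^\pm})=Z_{C_2\wr S_n}(w_{\lambda^\pm})$, or it is an index-$2$ subgroup. The key claim is that this dichotomy is governed exactly by the two index sets in the statement. I would prove it by restricting the sign homomorphism $\epsilon\colon C_2\wr S_n\to C_2$, $[a_1,\ldots,a_n;\sigma]\mapsto\prod a_i$ (whose kernel is $W_{SO_{2n}}$), to the explicit centralizer of Theorem \ref{centralizer}, block by block. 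On the factor of an even positive part $m^{x}$, every sign $h_i$ occurs $m$ times, so $\epsilon$ is trivial there; on the factor of any odd part, a centralizer element with a single $h_i=-1$ has $\epsilon=(-1)^k=-1$, and on the factor of a negative even part the cyclic generator $[\overline{\mathbf 1}_m;\sigma_{m,1}]$ satisfies $\epsilon=-1$, so $\epsilon$ is surjective in both of these cases. Since $\epsilon$ on a product of blocks is the product of the block restrictions, $\epsilon$ annihilates the whole centralizer (equivalently $Z_{C_2\wr S_n}\subseteq W_{SO_{2n}}$) if and only if every block is an even positive part, which is exactly the first index set.

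With the dichotomy and the orders $|Z_{C_2\wr S_n}(w_{\lambda^\pm})|=\prod_i(2i)^{a_i}a_i!\prod_j(2j)^{b_j}b_j!$ from Theorem \ref{centralizer} in hand, converting the group average into the class sum $\sum_C\det(I+tuvA_{w_C})^r/|Z_{W_{SO_{2n}}}(w_C)|$ assembles the two families of terms: a signed partition in the second index set gives a single class with $|Z_{W_{SO_{2n}}}|=\tfrac12|Z_{C_2\wr S_n}|$, producing the factor $2$ over $\prod_i(2i)^{a_i}a_i!\prod_j(2j)^{b_j}b_j!$; a signed partition with only even positive parts gives classes on which the centralizer does not drop, with reduced numerator $\prod_{i\text{ even}}(1-(tuv)^i)^{a_ir}$ over $\prod_{i\text{ even}}(2i)^{a_i}a_i!$. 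I expect the main obstacle to be precisely this last counting step: one must track how many $W_{SO_{2n}}$-classes sit inside each $C_2\wr S_n$-class — a class with $Z_{W_{SO_{2n}}}=Z_{C_2\wr S_n}$ splits into two $W_{SO_{2n}}$-classes carrying identical determinant data, whereas an index-$2$ class stays single but with the centralizer halved — and combine this multiplicity with the centralizer index from Lemma \ref{index2} to pin down the weight of each signed partition. This is the delicate point, since a miscount there shifts a coefficient by a factor of $2$, so it is the step deserving the most care.
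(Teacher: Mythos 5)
Your proposal follows the paper's own strategy step for step (Fact \ref{generalform}, constancy of $\det(I+tuvA_w)$ on $C_2\wr S_n$-classes, the centralizer dichotomy of Lemma \ref{index2}), and your block-by-block restriction of the sign character $[a_1,\ldots,a_n;\sigma]\mapsto\prod_i a_i$ to the centralizers of Theorem \ref{centralizer} is a correct, self-contained proof of the dichotomy, which the paper only obtains by citation. The gap is exactly at the point you flag as ``delicate'' and then leave unresolved: the passage from $C_2\wr S_n$-classes to $W_{SO_{2n}}$-classes. By your own (correct) observation, a signed partition $\lambda^{\pm}$ in the first index set --- only even positive parts, so $Z_{W_{SO_{2n}}}(w_{\lambda^{\pm}})=Z_{C_2\wr S_n}(w_{\lambda^{\pm}})$ --- splits into \emph{two} $W_{SO_{2n}}$-classes carrying identical determinant data, and each of them contributes $\det(I+tuvA_{w_{\lambda^{\pm}}})^{r}/|Z_{C_2\wr S_n}(w_{\lambda^{\pm}})|$ to the class sum. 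Hence such a $\lambda^{\pm}$ contributes the first-sum term \emph{twice}, not once; a non-split $\lambda^{\pm}$ contributes once with the centralizer halved. Either way the weight is the same, and your mechanism, carried out to the end, yields the uniform formula
\[
\mu_{\mathcal{M}^{0}_{\mathbb{Z}^r}SO_{2n}}(t,u,v)
=\sum_{\lambda^{\pm}\in\mathcal{P}^{\pm,0}_n}
2\,\frac{\prod_{i=1}^{n}(1-(-tuv)^{i})^{a_{i}r}\prod_{j=1}^{n}(1+(-tuv)^{j})^{b_{j}r}}
{\prod_{i=1}^{n}(2i)^{a_{i}}a_{i}!\,\prod_{j=1}^{n}(2j)^{b_{j}}b_{j}!},
\]
which is \emph{not} the stated formula: the statement's first sum is missing the factor $2$. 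So the two assertions in your write-up --- that split classes give two classes with equal data, and that the assembly reproduces the statement as printed --- are mutually inconsistent, and it is the second one that is wrong.

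You should also know that this is not a defect of your mechanism but of the statement, and of the paper's own proof, which silently treats the intermediate expression $\sum_{\lambda^{\pm}}\det(I+tuvA_{w_{\lambda^{\pm}}})^{r}/|Z_{W_{SO_{2n}}}(w_{\lambda^{\pm}})|$ as the class sum, i.e.\ pretends that each $C_2\wr S_n$-class inside $W_{SO_{2n}}$ is a single $W_{SO_{2n}}$-class --- precisely the miscount you warned against. A concrete check: take $n=2$, $r=1$. Then $W_{SO_{4}}=\{[1,1;e],[-1,-1;e],[1,1;(12)],[-1,-1;(12)]\}$ is abelian, so $[1,1;(12)]$ and $[-1,-1;(12)]$, which are conjugate in $C_2\wr S_2$, lie in distinct $W_{SO_4}$-classes; Fact \ref{generalform} evaluates to $\frac{1}{4}\left((1+tuv)^{2}+(1-tuv)^{2}+2(1-(tuv)^{2})\right)=1$ (as it must, since here $(T)^1/W\cong\mathbb{C}^{2}/\{\pm 1\}$ is a cone, with trivial rational cohomology), whereas the formula as stated gives $\frac{1}{4}\left(3+(tuv)^{2}\right)$, which does not even have integer coefficients. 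So to turn your proposal into a proof you must actually perform the counting you postponed; the correct output is the displayed formula above (equivalently, the stated formula with the factor $2$ also inserted in the first sum), not the statement as printed.
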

\begin{proof}
As in the proof of Theorem \ref{sp2n}, the formula in Fact \ref{generalform} for $G=SO_{2n}$ can be written as
\[
    \mu_{\mathcal{M}^{0}_{\Gamma}SO_{2n}}(t,u,v)
    =\mu_{\mathcal{M}^{0}_{\mathbb{Z}^r}SO_{2n}}(t,u,v)
    =
    \sum_{\lambda^{\pm}\in\mathcal{P}^{\pm,0}_n}\frac{\big(\det(I+tuvA_{w_{\lambda^{\pm}}})\big)^{r}}{|Z_{W_{SO_{2n}}}(w_{\lambda^{\pm}})|},
    \]
where for each $\lambda^{\pm}=[1^{a_{1}}\overline{1}^{b_{1}}2^{a_{2}}\overline{2}^{b_{2}}\ldots n^{a_{n}}\overline{n}^{b_{n}}]\in\mathcal{P}^{\pm,0}_n$ we choose a representative $w_{\lambda^{\pm}}\in W_{SO_{2n}}$ of the corresponding conjugacy class in $W_{SO_{2n}}$ corresponding to $\lambda^{\pm}$ defined in the same way as in proof of Theorem \ref{sp2n}. We obtain
\[
\det(I+tuvA_{w_{\lambda^{\pm}}})=\prod_{i=1}^{n}(1-(-tuv)^{i})^{a_{i}}\prod_{j=1}^{n}(1+(-tuv)^{j})^{b_{j}}.
\]
By Fact \ref{centralizer} and Lemma \ref{index2}, we have $Z_{W_{SO_{2n}}}(w_{\lambda^{\pm}})=Z_{C_2\wr S_n}(w_{\lambda^{\pm}})$ if and only if $a_{k}=b_{k}=0$ when $k$ is odd and $b_{k}=0$ when $k$ is even, $1\leq k\leq n$. If this is the case, then $|Z_{W_{SO_{2n}}}(w_{\lambda^{\pm}})|=\prod_{i=1,i\text{ even}}^{n}(2i)^{a_{i}}a_{i}!$, otherwise,  $|Z_{W_{SO_{2n}}}(w_{\lambda^{\pm}})|=\frac{\prod_{i=1}^{n}(2i)^{a_{i}}a_{i}!\prod_{j=1}^{n}(2j)^{b_{j}}b_{j}!}{2}$. This gives the desired formula.
\end{proof}

\Addresses
\end{document}